 \newtheorem{theorem}{Theorem}[section]
 \newtheorem{proposition}[theorem]{Proposition}
 \newtheorem*{theorem*}{Theorem}
\newtheorem*{proposition*}{Proposition}
\newtheorem*{lemma*}{Lemma}
\theoremstyle{definition}
 \newtheorem{definition}[theorem]{Definition}
 \theoremstyle{remark}
 \newtheorem{remark}[theorem]{Remark}
   \newtheorem*{claim*}{Claim}
\newcommand{\op}[1]{\operatorname{#1}}
\newcommand{\scal}[2]{\ensuremath{\left\langle #1 | #2 \right\rangle}}
\def\XXint#1#2#3{{\setbox0=\hbox{$#1{#2#3}{\int}$}
\vcenter{\hbox{$#2#3$}}\kern-.5\wd0}}
\newcommand{\C}{\ensuremath{\mathbb{C}}}
\newcommand{\R}{\ensuremath{\mathbb{R}}} 
\newcommand{\T}{\ensuremath{\mathbb{T}}} 
\newcommand{\Z}{\ensuremath{\mathbb{Z}}}
\newcommand{\Rn}{\ensuremath{\R^{n}}}
\newcommand{\cH}{\ensuremath{\mathscr{H}}}
\newcommand{\cL}{\ensuremath{\mathscr{L}}}
\newcommand{\cS}{\ensuremath{\mathscr{S}}}
\newcommand{\scL}{\mathscr{L}}
\newcommand{\Sp}{\op{Sp}}
\begin{document}
\title[Logarithmic Sobolev Inequalities on Noncommutative Tori]
{Logarithmic Sobolev Inequalities of Fractional Order on Noncommutative Tori}

\author[Gihyun Lee\hfil \hfilneg] {Gihyun Lee}  

\address{Gihyun Lee \newline
Department of Mathematics: Analysis, Logic and Discrete Mathematics, Ghent University, Krijgslaan 281, Building S8, B 9000 Ghent, Belgium}
\email{Gihyun.Lee@ghent.ac.kr}

\subjclass[2020]{35A23, 46E35, 46L52, 58B34} \keywords{Logarithmic Sobolev inequality, noncommutative tori}
\begin{abstract}
In this paper, we prove a version of the logarithmic Sobolev inequality of fractional order on noncommutative $n$-tori for any dimension $n\geq 2$. \end{abstract}

\maketitle \numberwithin{equation}{section}
\allowdisplaybreaks

\section{Introduction}

Among all noncommutative spaces studied in Alain Connes' noncommutative geometry program~\cite{Co:NCG} noncommutative tori are the most extensively studied ones. One of the reasons behind the extensive research conducted on noncommutative tori is the presence of counterparts to various mathematical tools employed in the study of analysis and geometry. For example, counterparts to the notions such as vector bundles, Fourier series, spaces of smooth functions, Sobolev spaces and pseudodifferential calculus are established and available in the setting of noncommutative tori (see, e.g., \cite{CXY:CMP13, Co:CRAS80, HLP:Part1, HLP:Part2, PS:CMP03, XXY:MAMS18} and the references therein). Moreover, noncommutative tori are utilized in the mathematical modeling of physical phenomena, such as the quantum Hall effect~\cite{BES:JMP94}, topological insulators~\cite{BCR:RMP16, PS:Springer16} and string theory~\cite{CDS:JHEP98, SW:JHEP99}.

Let us now briefly review the classical Sobolev inequalities on Euclidean spaces. The classical Sobolev inequality states that if a function $f$ defined on $\Rn$, along with its first derivatives, belongs to $L_p(\Rn)$ and if $q = (\frac{1}{p}-\frac{1}{n})^{-1}$ is finite, then $f$ is in $L_q(\Rn)$. This inequality has broad applications in various fields of analysis, including the study of partial differential equations. However, the classical Sobolev inequality has the following limitations. As evident from the value of $q$ we see that, as the dimension $n$ increases, the difference between $q$ and $p$ narrows. Consequently, as $n$ grows, the improvement in summability obtained from the differentiability decreases. For this reason, in the infinite dimensional setting such as quantum field theory we cannot expect to obtain an inequality precisely corresponds to the classical Sobolev inequality. Motivated by this observation Gross~\cite{Gr:AJM75} introduced and proved the following logarithmic Sobolev inequality on Euclidean spaces.
\begin{equation*}
\int_{\Rn} |f(x)|^2 \log{|f(x)|} \, d\nu(x) \leq \int_{\Rn} |\nabla f(x)|^2 \, d\nu(x) + \|f\|_2^2\log{\|f\|_2} .
\end{equation*}
Here $\nu$ denotes the Gaussian measure on $\Rn$ and $\|\cdot\|_2$ denotes the $L_2$-norm associated with $\nu$. As mentioned in~\cite{Gr:AJM75} this inequality can be utilized in the infinite dimensional setting, because the coefficients in the inequality do not depend on the dimension $n$.

Since the introduction of Gross' logarithmic Sobolev inequality, various methods have been employed to establish logarithmic Sobolev inequalities in different settings. Given the vast number of papers on this topic, it is not feasible to encompass all the references here. However, for a partial overview, let us list a few results which can be found in the literature. Rosen presented and proved a logarithmic Sobolev inequality on weighted $\Rn$~\cite{Ro:TAMS76}. Weissler proved a logarithmic Sobolev inequality on the circle~\cite{We:JFA80}. Gross~\cite{Gr:IJM92} and Chatzakou-Kassymov-Ruzhansky~\cite{CKR:arXiv21} investigated logarithmic Sobolev inequalities on Lie groups. Stroock-Zegarlinski~\cite{SZ:JFA92} and Bodineau-Helffer~\cite{BH:JFA99} obtained logarithmic Sobolev inequalities for spin systems. Brannan-Gao-Junge derived a version of logarithmic Sobolev inequality by utilizing lower bound of the Ricci curvature of a compact Riemannian manifold~\cite{BGJ:AM22}.

In the case of noncommutative tori, as mentioned above, there exist counterparts to the tools used in Fourier theory for ordinary tori (see~\cite{CXY:CMP13}). Therefore, the arguments employed on ordinary tori can often be adapted to the setting of noncommutative tori. By using this harmonic analysis technique of noncommutative tori and the theory of operator algebras Xiong-Xu-Yin provided a detailed account on the construction of Sobolev, Besov and Triebel-Lizorkin spaces on noncommutative tori~(\cite{XXY:MAMS18}; see also~\cite{HLP:Part2, Ro:APDE08, Sp:Padova92} for Sobolev spaces on noncommutative tori). The embedding theorems between these spaces are also proved in~\cite{XXY:MAMS18}. In addition, in~\cite{MP:JMP22, MP:AM23} McDonald-Ponge proved versions of Sobolev inequalities on noncommutative tori as consequences of the Lieb-Thirring inequalities on (curved) noncommutative tori.

However, to the best of the author's knowledge, logarithmic Sobolev inequalities in the setting of noncommutative tori haven't been studied much in the literature. The logarithmic Sobolev inequality on noncommutative $2$-tori by Khalkhali-Sadeghi~\cite{KS:JPDOA17} is the only existing result on this topic. They attempted to establish logarithmic Sobolev inequality on noncommutative $2$-tori by adopting Weissler's proof of the logarithmic Sobolev inequality on the circle~\cite{We:JFA80}. However, due to technical issues arising from the noncommutativity, they were only able to obtain the following form of logarithmic Sobolev inequality for strictly positive elements of the form $x = \sum_{k\in\Z} x_k U_1^kU_2^{kl}$, where $0\neq l\in\Z$.
\begin{equation*}
\tau\big( x^2 \log{x} \big) \leq \sum_{k\in\Z} (1+|l|) \, |k| \, |x_k|^2 + \|x\|_{L_2}^2 \log{\|x\|_{L_2}} .
\end{equation*}
We refer to Section~\ref{sec:Preliminaries} for notations and background material on noncommutative tori.

In this paper, we prove the following logarithmic Sobolev inequalities of fractional order on noncommutative tori for any dimension $n\geq 2$. 

\begin{theorem} \label{thm:intro.log-Sobolev-ineq}
Let $0<x\in C^\infty(\T_\theta^n)$, $a>0$ and $0<s<\frac{n}{2}$. Then there is a constant $C(n,s,a)>0$ depends only on $n,s$ and $a$ such that
\begin{equation} \label{eq:intro.log-Sobolev-ineq}
\tau\left[ x^2 \log\left( \frac{x^2}{\|x\|_{L_2}^2} \right) \right] \leq C(n,s,a) \, \|x\|_{W_2^s}^2 - \frac{n}{s} (\log{a}+1) \, \|x\|_{L_2}^2 .
\end{equation}
\end{theorem}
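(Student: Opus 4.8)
The plan is to obtain \eqref{eq:intro.log-Sobolev-ineq} from the critical fractional Sobolev embedding $W_2^s(\T_\theta^n)\hookrightarrow L_q(\T_\theta^n)$ at the exponent $q=\frac{2n}{n-2s}$, which is finite and strictly larger than $2$ precisely because $0<s<\frac{n}{2}$, by adapting the classical ``differentiation of $L_p$-norms'' argument to the noncommutative setting. First I would note that the left-hand side of \eqref{eq:intro.log-Sobolev-ineq} is the entropy $\op{Ent}(x^2):=\tau[x^2\log x^2]-\|x\|_{L_2}^2\log\|x\|_{L_2}^2$. Since $x>0$ and $\log x$ commutes with $x$ in the functional calculus, the function $p\mapsto \tau[x^p]$ is smooth with $\frac{d}{dp}\tau[x^p]=\tau[x^p\log x]$, and writing $g(p):=\log\|x\|_{L_p}=\frac{1}{p}\log\tau[x^p]$, a direct computation gives
\[
\op{Ent}(x^2)=4\,\|x\|_{L_2}^2\,g'(2).
\]

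Next I would bound $g'(2)$ by interpolation. The H\"older-type interpolation (Lyapunov) inequality for the noncommutative $L_p$-spaces over $(\T_\theta^n,\tau)$ gives, for $2<p<q$,
\[
g(p)\le (1-\lambda)\,g(2)+\lambda\,g(q),\qquad \tfrac{1}{p}=\tfrac{1-\lambda}{2}+\tfrac{\lambda}{q},
\]
and letting $p\to 2^+$ yields $g'(2)\le \frac{q}{2(q-2)}\big(g(q)-g(2)\big)$. Combining this with the identity above and the elementary computation $\frac{q}{q-2}=\frac{n}{2s}$ for our value of $q$, I arrive at the logarithmic bound
\[
\op{Ent}(x^2)\le \frac{n}{2s}\,\|x\|_{L_2}^2\,\log\!\left(\frac{\|x\|_{L_q}^2}{\|x\|_{L_2}^2}\right).
\]

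Then I would feed in the Sobolev embedding $\|x\|_{L_q}\le K\|x\|_{W_2^s}$, with $K=K(n,s)$ the norm of the embedding, and linearize the logarithm by the tangent-line inequality $\log t\le \beta t-\log\beta-1$ (valid for all $t,\beta>0$) applied at $t=K^2\|x\|_{W_2^s}^2/\|x\|_{L_2}^2$. This produces
\[
\op{Ent}(x^2)\le \frac{n}{2s}\,\beta K^2\,\|x\|_{W_2^s}^2-\frac{n}{2s}(\log\beta+1)\,\|x\|_{L_2}^2.
\]
Choosing $\beta=e\,a^2$ makes the coefficient of $\|x\|_{L_2}^2$ equal to $-\frac{n}{s}(\log a+1)$, exactly as required, and sets $C(n,s,a)=\frac{n}{2s}\,e\,a^2K^2$, which indeed depends only on $n$, $s$ and $a$.

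I expect the main obstacle to be securing the critical fractional Sobolev embedding $W_2^s(\T_\theta^n)\hookrightarrow L_q(\T_\theta^n)$ with $q=\frac{2n}{n-2s}$ on noncommutative tori; once this is available (building on the spaces and embedding theorems of Xiong--Xu--Yin, or on the Lieb--Thirring/Sobolev estimates of McDonald--Ponge) the remaining steps are soft. The secondary technical points to verify are the smoothness of $p\mapsto\tau[x^p]$ together with the differentiation formula, the Lyapunov interpolation inequality in the noncommutative $L_p$-scale, and the legitimacy of the functional calculus for $\log x$ when $x$ is merely positive rather than bounded below (handled by the continuity of $t\mapsto t^2\log t^2$ at $0$, so that $x^p\log x$ remains a bounded, $\tau$-measurable element for $p\ge 2$).
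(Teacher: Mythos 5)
Your proposal is correct, and it reaches the theorem by a genuinely different route at the key step. Both your argument and the paper's reduce to the same intermediate inequality
\begin{equation*}
\tau\left[ x^2 \log\left( \frac{x^2}{\|x\|_{L_2}^2} \right) \right] \;\leq\; \frac{n}{2s}\, \|x\|_{L_2}^2 \, \log\left( \frac{\|x\|_{L_q}^2}{\|x\|_{L_2}^2} \right), \qquad q = \frac{2n}{n-2s} ,
\end{equation*}
and from there the two proofs coincide: the Xiong--Xu--Yin embedding $W_2^s(\T_\theta^n)\subset L_q(\T_\theta^n)$ (the paper's Proposition~\ref{prop:preliminaries.Sobolev-embedding}, whose hypothesis $s = n(\frac{1}{2}-\frac{1}{q})$ is exactly your choice of $q$), the tangent-line bound $\log t \leq \beta t - \log\beta - 1$, and the choice $\beta = ea^2$, yielding the same constant: your $\frac{n}{2s}ea^2K^2$ is the paper's $\frac{nea^2}{2s}C(n,s)$. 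The difference lies in how the intermediate inequality is obtained. The paper fixes the finite value $\varepsilon = \frac{q-2}{2}$, rewrites the entropy as $\frac{\|x\|_{L_2}^2}{\varepsilon}\,\tau\big[\frac{x^2}{\|x\|_{L_2}^2}\log\big(x^{2\varepsilon}/\|x\|_{L_2}^{2\varepsilon}\big)\big]$, and applies Jensen's \emph{operator} inequality (Choi--Davis) to the operator concave function $\log$ and the positive normalized map $u\mapsto \tau[x^2u]/\|x\|_{L_2}^2$; substituting the operator Jensen inequality for the scalar one of Chatzakou--Ruzhansky is precisely the paper's main technical point. You instead run the classical differentiation-of-norms argument: the identity $\op{Ent}(x^2) = 4\|x\|_{L_2}^2\, g'(2)$ with $g(p) = \log\|x\|_{L_p}$ (which checks out, since $\tau[x^2]=\|x\|_{L_2}^2$ and $\frac{d}{dp}\tau[x^p]=\tau[x^p\log x]$), then Lyapunov interpolation and the limit $p\to 2^+$, with $\lambda'(2)=\frac{q}{2(q-2)}$ and $\frac{q}{q-2}=\frac{n}{2s}$ as you computed. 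What each route buys: yours needs only the scalar noncommutative H\"older inequality (applied to the commuting factors $|x|^{p(1-\lambda)}$ and $|x|^{p\lambda}$, so unproblematic over $(\T_\theta^n,\tau)$) together with differentiability of $p\mapsto\tau[x^p]$, which is immediate here because strict positivity of $x$ makes $\log x$ bounded and $p\mapsto x^p = e^{p\log x}$ norm-analytic; the paper's route avoids any limiting or differentiation argument at the price of invoking operator convexity theory. Your closing worry about $x$ being merely positive rather than bounded below is moot: the theorem assumes $x$ strictly positive, which, as the paper itself notes, is needed anyway for the logarithm to make sense.
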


Our proof of this theorem is based on the short proof of fractional order logarithmic Sobolev inequality on $\Rn$ presented in a recent article by Chatzakou-Ruzhansky~\cite{CR:arXiv23}. In the setting of noncommutative tori, there are tools correspond to the key tools used in the proof by Chatzakou-Ruzhansky~\cite{CR:arXiv23}, which enables us to apply their approach immediately to noncommutative tori. Instead of Jensen's inequality for concave functions and probability measures used in the proof by Chatzakou-Ruzhansky, we utilize the Jensen's operator inequality known in the operator algebraic setting~(\cite{Ch:IJM74, Da:PAMS57}; see also~\cite{PFMS:Zagreb05}) and the embedding theorem between Sobolev spaces proved in~\cite{XXY:MAMS18} to prove Theorem~\ref{thm:intro.log-Sobolev-ineq}.

Although the main result of this paper, Theorem~\ref{thm:intro.log-Sobolev-ineq}, is the first result on logarithmic Sobolev inequality of \emph{fractional order} on noncommutative tori, it still has certain limitations and room for improvement. First, the Sobolev norm used on the right-hand side of~(\ref{eq:intro.log-Sobolev-ineq}) needs to be replaced by the homogeneous Sobolev norm,
\begin{equation*}
\|x\|_{\dot{W}_2^s(\T_\theta^n)}^2 := \sum_{k\in\Z^n} |k|^{2s} \, |x_k|^2 , \qquad s>0 .
\end{equation*}
However, an inequality between homogeneous Sobolev norms on noncommutative tori which can be applied to the arguments in this paper is missing in the literature. Although McDonald-Ponge's Sobolev inequalities~\cite{MP:JMP22, MP:AM23} deal with homogeneous Sobolev norms, these results cannot be directly applied to the arguments of this paper. In~\cite{MP:JMP22, MP:AM23} Sobolev inequalities are proven only for zero mean value elements, i.e., elements $x$ with $\tau(x) = 0$. But in order for the logarithm defined by holomorphic functional calculus appearing in~(\ref{eq:intro.log-Sobolev-ineq}) to make sense, our focus should be restricted to strictly positive elements, and strictly positive elements cannot have a zero mean. Another issue is the lack of a result on the sharpness of Sobolev inequalities on noncommutative tori. In~\cite{CR:arXiv23} Chatzakou-Ruzhansky utilized the sharp constant of the Sobolev inequality on $\Rn$ obtained in~\cite{CT:JMAA04} to get an explicit expression of the constant appearing in their logarithmic Sobolev inequality and study its behavior. Similarly, if we could determine the sharpness of Sobolev inequalities on noncommutative tori, we could expect to improve the constant on the right-hand side of~(\ref{eq:intro.log-Sobolev-ineq}) or get an explicit expression of it. In particular, the sharpness of Sobolev inequalities would enable us to verify the conjecture on the logarithmic Sobolev inequality on noncommutative $2$-tori stated in~\cite{KS:JPDOA17} by setting $a = \frac{1}{e}$ in~(\ref{eq:intro.log-Sobolev-ineq}).

This paper is organized as follows. In Section~\ref{sec:Preliminaries}, We gather some background material on noncommutative tori, Sobolev spaces and Jensen's operator inequality used in this paper. In Section~\ref{sec:Proof}, we prove the logarithmic Sobolev inequalities of fractional order on noncommutative tori, the main result of this paper (Theorem~\ref{thm:intro.log-Sobolev-ineq}).

\section{Preliminaries} \label{sec:Preliminaries}

In this section, we gather background material on noncommutative tori, Sobolev spaces and Jensen's operator inequality used in the proof of the main theorem in Section~\ref{sec:Proof}.

\subsection{Noncommutative tori}
We refer to~\cite{Co:NCG, HLP:Part1, Ri:CM90} for a more detailed account on noncommutative tori.

Let $\theta$ be a skew-symmetric $n\times n$ matrix over $\R$. The noncommutative torus associated with $\theta$, denoted by $\T_\theta^n$, is a noncommutative space in the sense of Alain Connes' noncommutative geometry~\cite{Co:NCG}. The $C^*$-algebra $C(\T_\theta^n)$ and the von Neumann algebra $L_\infty(\T_\theta^n)$ of $\T_\theta^n$ are generated by the unitaries $U_1,\ldots,U_n$ subject to the relations,
\begin{equation*}
U_kU_j = e^{2\pi i\theta_{jk}}U_jU_k , \qquad 1\leq j,k\leq n .
\end{equation*}
These unitaries $U_1,\ldots,U_n$ can be concretely realized as unitary operators on $L_2(\T^n)$ (see, e.g., \cite{HLP:Part1}), and hence both $C(\T_\theta^n)$ and $L_\infty(\T_\theta^n)$ are $*$-subalgebras of $\cL(L_2(\T^n))$, the $C^*$-algebra of bounded linear operators on $L_2(\T^n)$. If $\theta = 0$, then we recover the spaces of continuous linear functions $C(\T^n)$ and essentially bounded functions $L_\infty(\T^n)$ on the ordinary $n$-torus $\T^n = \Rn/(2\pi\Z)^n$.

In what follows, for any $k = (k_1,\ldots,k_n)\in\Z^n$, we shall denote $U_1^{k_1}\cdots U_n^{k_n}$ by $U^k$. Given any $T\in\cL(L_2(\T^n))$, we define
\begin{equation*}
\tau(T) = {\scal {T1} {1}}_{L_2(\T^n)} = (2\pi)^{-n}\int_{\T^n} (T1)(x) \, dx .
\end{equation*}
This defines a tracial state on both $C(\T_\theta^n)$ and $L_\infty(\T_\theta^n)$. By a direct computation it can be shown that $\tau(U^k) = 0$ for $0\neq k\in\Z^n$ and $\tau(1) = 1$. For $x,y\in C(\T_\theta^n)$ we define
\begin{equation*}
{\scal x y}_{L_2(\T_\theta^n)} = \tau(xy^*) .
\end{equation*}
Let us denote by $L_2(\T_\theta^n)$ the Hilbert space completion of $C(\T_\theta^n)$ with respect to this inner product. Then the family $\{U^k ; \, k\in\Z^n\}$ forms an orthonormal basis for $L_2(\T_\theta^n)$. This orthonormal basis plays the role of the standard orthonormal basis for $L_2(\T^n)$ utilized in Fourier analysis, i.e., every $x\in L_2(\T_\theta^n)$ can be uniquely written as follows.
\begin{equation} \label{eq:preliminaries.Fourier-series}
x = \sum_{k\in\Z^n} x_kU^k , \qquad x_k := {\scal x {U^k}}_{L_2(\T_\theta^n)} .
\end{equation}
Furthermore, the GNS construction (see, e.g., \cite{Ar:Springer81}) associated with $\tau$ gives rise to the $*$-representations of $C(\T_\theta^n)$ and $L_\infty(\T_\theta^n)$ on the Hilbert space $L_2(\T_\theta^n)$.

The $C^*$-algebra $C(\T_\theta^n)$ admits a strongly continuous action of $\R^n$, denoted by $\alpha_s(x)$ for $s\in\Rn$ and $x\in C(\T_\theta^n)$. Hence the triple $(C(\T_\theta^n),\Rn,\alpha)$ forms a $C^*$-dynamical system. For the elements $U^k$, $k\in\Z^n$, this action is given by
\begin{equation*}
\alpha_s(U^k) = e^{is\cdot k}U^k , \qquad s\in\Rn .
\end{equation*}
The $C^*$-dynamical system structure on $C(\T_\theta^n)$ enables us to define the dense subalgebra $C^\infty(\T_\theta^n)$ of smooth elements of the action $\alpha$, i.e., we define
\begin{equation*}
C^\infty(\T_\theta^n) := \left\{ x\in C(\T_\theta^n) ; \, \text{$\Rn\ni s\mapsto\alpha_s(x)\in C(\T_\theta^n)$ is a $C^\infty$-map} \right\} .
\end{equation*}
We also have the following characterization of the smooth elements in $C(\T_\theta^n)$ in terms of the Fourier series expansion given in~(\ref{eq:preliminaries.Fourier-series}).
\begin{equation*}
C^\infty(\T_\theta^n) = \Big\{ x = \sum_{k\in\Z^n} x_kU^k ; \, (x_k)_{k\in\Z^n}\in\cS(\Z^n) \Big\} .
\end{equation*}
Here $\cS(\Z^n)$ denotes the space of rapidly decaying sequences indexed by $\Z^n$ with entries in $\C$. Furthermore, the action $\Rn\ni s\mapsto\alpha_s(x)\in C^\infty(\T_\theta^n)$ also enables us to define the derivations $\partial_1,\ldots,\partial_n$ on $C^\infty(\T_\theta^n)$. For $j=1,\ldots,n$, we define $\partial_j:C^\infty(\T_\theta^n)\rightarrow C^\infty(\T_\theta^n)$ by letting
\begin{equation*}
\partial_j(x) = \partial_{s_j}\alpha_s(x) \big|_{s=0} , \qquad x\in C^\infty(\T_\theta^n) .
\end{equation*}
In particular, for the elements $U^k$, $k = (k_1,\ldots,k_n)\in\Z^n$, we obtain $\partial_j(U^k) = ik_j(U^k)$, $1\leq j\leq n$.

\subsection{Sobolev and $L_p$-spaces}
A detailed account on Sobolev spaces on noncommutative tori can be found in~\cite{HLP:Part2, Ro:APDE08, Sp:Padova92, XXY:MAMS18}.

Let us denote the $L_2$-version of Sobolev space of order $s\geq 0$ on $\T_\theta^n$ by $W_2^s(\T_\theta^n)$. This space consists of elements $x = \sum_{k\in\Z^n} x_kU^k$ in $L_2(\T_\theta^n)$ such that
\begin{equation*}
\big( (1+|k|^2)^{\frac{s}{2}} x_k \big)_{k\in\Z^n}\in\ell_2(\Z^n) .
\end{equation*}
The space $W_2^s(\T_\theta^n)$ is a Hilbert space with the inner product,
\begin{equation*}
{\scal x y}_{W_2^s(\T_\theta^n)} := \sum_{k\in\Z^n} (1+|k|^2)^s x_k \overline{y_k} , \qquad x = \sum_{k\in\Z^n}x_kU^k ,  y = \sum_{k\in\Z^n}y_kU^k \in W_2^s(\T_\theta^n) .
\end{equation*}
We shall denote the norm associated with this inner product by $\|\cdot\|_{W_2^s(\T_\theta^n)}$.

We apply the theory of noncommutative $L_p$-spaces (see, e.g., \cite{Ne:JFA74, Te:Copenhagen81}) to the von Neumann algebra $L_\infty(\T_\theta^n)$ to construct the $L_p$-spaces of $\T_\theta^n$. Recall that the elements of $L_\infty(\T_\theta^n)$ are represented as bounded linear operators on the Hilbert space $L_2(\T^n)$. We say that a closed and densely defined operator on $L_2(\T^n)$ is affiliated with $L_\infty(\T_\theta^n)$ if it commutes with the commutant of $L_\infty(\T_\theta^n)$ in $\cL(L_2(\T^n))$. Given a positive operator $x$ on $L_2(\T^n)$ affiliated with $L_\infty(\T_\theta^n)$ let $x = \int_0^\infty \lambda \, dE(\lambda)$ be its spectral representation and set
\begin{equation*}
\tau(x) := \int_0^\infty \lambda \, d\tau(E(\lambda)) .
\end{equation*}
For $1\leq p<\infty$, the $L_p$-space of $\T_\theta^n$, denoted by $L_p(\T_\theta^n)$, consists of all elements $x$ in the $*$-algebra of $L_\infty(\T_\theta^n)$-affiliated operators on $L_2(\T^n)$ such that
\begin{equation*}
\|x\|_{L_p(\T_\theta^n)} := \tau\big(|x|^p\big)^{\frac{1}{p}}<\infty .
\end{equation*}
The space $L_p(\T_\theta^n)$ is a Banach space with the norm $\|\cdot\|_{L_p(\T_\theta^n)}$.

Furthermore, we have the following inclusion of Sobolev spaces into $L_p$-spaces. This is a particular case of the embedding theorem for Sobolev spaces on noncommutative tori stated in~\cite{XXY:MAMS18}.
\begin{proposition}[{see~\cite[Theorem~6.6]{XXY:MAMS18}}] \label{prop:preliminaries.Sobolev-embedding}
Let $p>2$ and set $s = n(\frac{1}{2}-\frac{1}{p})$. Then there is a continuous embedding,
\begin{equation*}
W_2^s(\T_\theta^n)\subset L_p(\T_\theta^n) .
\end{equation*}
\end{proposition}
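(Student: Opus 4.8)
The plan is to recast the embedding as a mapping property of the Bessel potential and then to extract the sharp exponent from a heat-kernel (ultracontractivity) estimate. Let $\Delta$ denote the flat Laplacian on $\T_\theta^n$, i.e.\ the positive operator acting by $\Delta U^k=|k|^2U^k$, and set $J^s:=(1+\Delta)^{-s/2}$, so that $J^s$ acts on Fourier coefficients by $x_k\mapsto (1+|k|^2)^{-s/2}x_k$. By the very definition of the inner product on $W_2^s(\T_\theta^n)$, the operator $J^s$ is an isometric isomorphism $L_2(\T_\theta^n)\to W_2^s(\T_\theta^n)$; hence the claimed continuous embedding is \emph{equivalent} to the bound
\[ \|J^s x\|_{L_p(\T_\theta^n)}\le C\,\|x\|_{L_2(\T_\theta^n)}, \qquad x\in L_2(\T_\theta^n). \]
First I would record that $s=n(\tfrac12-\tfrac1p)$ is exactly the critical (scaling) value, so the usual lossy tools — Hausdorff--Young combined with H\"older, or complex interpolation between $L_2\to L_2$ and $L_2\to L_\infty$ — only yield $W_2^{s+\varepsilon}\subset L_p$ and miss the endpoint. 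Capturing the sharp $s$ is the crux.

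The key analytic input is the heat semigroup $T_t:=e^{-t\Delta}$, acting by $T_tU^k=e^{-t|k|^2}U^k$. I would first establish its \textbf{ultracontractivity}: for $0<t\le1$,
\[ \|T_t x\|_{L_\infty(\T_\theta^n)}\le C_n\,t^{-n/2}\,\|x\|_{L_1(\T_\theta^n)}. \]
This is immediate from the Fourier expansion: writing $T_tx=\sum_{k\in\Z^n}e^{-t|k|^2}\,\tau\big(x(U^k)^*\big)\,U^k$ and using $\|U^k\|_{L_\infty}=1$ together with $|\tau(x(U^k)^*)|\le\|x\|_{L_1}$ (as $U^k$ is unitary), one gets $\|T_tx\|_{L_\infty}\le\|x\|_{L_1}\sum_{k}e^{-t|k|^2}$, and $\sum_{k\in\Z^n}e^{-t|k|^2}=\prod_{j=1}^n\sum_{m\in\Z}e^{-tm^2}\le C_n\,t^{-n/2}$ for $0<t\le1$ by comparison with the Gaussian integral. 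Interpolating with the $L_1$-contractivity $\|T_t\|_{L_1\to L_1}\le1$ and using the $\tau$-symmetry of $T_t$ then gives $\|T_t\|_{L_2\to L_\infty}\le C\,t^{-n/4}$.

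Finally I would pass from ultracontractivity to the sharp embedding via the Varopoulos semigroup machinery. The semigroup $T_t$ is a symmetric (trace-preserving, $\tau$-symmetric) positive unital semigroup, so the complex and real interpolation theory of the noncommutative $L_p$-spaces $L_p(\T_\theta^n)$ applies to it. Using the subordination formula
\[ J^s=\frac{1}{\Gamma(s/2)}\int_0^\infty t^{s/2-1}e^{-t}\,T_t\,dt, \]
one splits the $t$-integral at a level depending on the size of the output and real-interpolates the two ultracontractive endpoint bounds to conclude that $J^s$ maps $L_2(\T_\theta^n)$ boundedly into the Lorentz space $L_{p,2}(\T_\theta^n)$ for $0<s<\tfrac n2$ and $\tfrac1p=\tfrac12-\tfrac sn$; since $p\ge2$ we have the continuous inclusion $L_{p,2}(\T_\theta^n)\subset L_p(\T_\theta^n)$, which finishes the proof. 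This last step is exactly the noncommutative analogue of Varopoulos' theorem that ultracontractivity implies the sharp Sobolev inequality, and it is the main obstacle: a direct Laplace-transform estimate only produces $L_2\to L_\infty$ for $s>\tfrac n2$, so reaching the critical exponent genuinely requires the real-interpolation/Lorentz-space refinement, and one must verify that the relevant interpolation functors behave as expected on the $L_p(\T_\theta^n)$ scale.
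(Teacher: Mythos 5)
The paper itself offers no proof of this proposition: it is imported verbatim as a special case of \cite[Theorem~6.6]{XXY:MAMS18}, where it is obtained inside Xiong--Xu--Yin's general Besov--Triebel--Lizorkin framework on quantum tori. Your proposal is therefore a genuinely different (and essentially self-contained) route, namely the noncommutative Varopoulos scheme: ultracontractivity of the heat semigroup plus subordination plus real interpolation. The outline is sound, and the reduction via the isometry $J^s\colon L_2(\T_\theta^n)\to W_2^s(\T_\theta^n)$, the coefficient estimate $|\tau(x(U^k)^*)|\le\|x\|_{L_1}$, and the Gaussian sum bound are all correct. Two points deserve tightening. First, the $L_2\to L_\infty$ bound does not interpolate between $L_1\to L_\infty$ and $L_1\to L_1$; the correct endpoints are $L_1\to L_\infty$ and $L_\infty\to L_\infty$ (the latter being dual, via $\tau$-symmetry, to the $L_1$-contractivity you invoke), and the contractivity and positivity of $T_t$ on these endpoint spaces should be justified, most easily by writing $T_t x=\int_{\R^n}p_t(\sigma)\,\alpha_\sigma(x)\,d\sigma$ with $p_t$ the Gaussian kernel, exhibiting $T_t$ as an average of trace-preserving $*$-automorphisms; alternatively, Cauchy--Schwarz on Fourier coefficients gives $\|T_t\|_{L_2\to L_\infty}\le\bigl(\sum_k e^{-2t|k|^2}\bigr)^{1/2}\lesssim t^{-n/4}$ directly. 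Second, splitting the subordination integral at a level depending on the output yields only the weak-type bound $L_2\to L_{p,\infty}$; to reach $L_{p,2}$ (hence $L_p$, as $p>2$) you need the same weak-type estimates at a pair of exponents $q_0<2<q_1$ together with Marcinkiewicz-type real interpolation, and the fact you flag as needing verification is indeed available: for the semifinite (here finite) trace $\tau$, real interpolation of the couple $(L_1(\T_\theta^n),L_\infty(\T_\theta^n))$ produces the noncommutative Lorentz spaces exactly as in the commutative case, via the generalized singular value function. What your route buys is an elementary, quantitative proof of precisely the $L_2$-embedding the paper needs; what the cited route buys is the full scale of embeddings (all $p$, Besov and Triebel--Lizorkin variants) at the cost of substantially heavier machinery.
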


%

%
%

%
%
%
%
%
%

\subsection{Jensen's operator inequality}

\begin{definition}
Let $f(t)$ be a function on an interval $I\subset\R$. We say that $f(t)$ is \emph{operator convex} if, for all $\lambda\in[0,1]$ and for every selfadjoint operator $A$ and $B$ on a Hilbert space $\cH$ such that $\Sp{A}\subset I$ and $\Sp{B}\subset I$, we have
\begin{equation*}
f ( (1-\lambda)A + \lambda B ) \leq (1-\lambda)f(A) + \lambda f(B) .
\end{equation*}
In contrast, $f(t)$ is called \emph{operator concave} if $-f(t)$ is operator convex.
\end{definition}

\begin{proposition}[{\cite{Ch:IJM74, Da:PAMS57}; see also~\cite[Theorem~1.20]{PFMS:Zagreb05}}] \label{prop:preliminaries.operator-Jensen}
Let $\cH$ and $\cH'$ be Hilbert spaces. Suppose that $f(t)$ is an operator convex function on the interval $I$, $x\in\scL(\cH)$ and $\Sp{x}\subset I$. Then we have
\begin{equation} \label{eq:preliminaries.operator-Jensen}
\pi(f(x)) \geq f(\pi(x)) ,
\end{equation}
for any positive normalized linear map $\pi:\scL(\cH)\rightarrow\scL(\cH')$.
\end{proposition}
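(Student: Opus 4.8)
The statement is the Davis--Choi form of Jensen's operator inequality, and the plan is to follow the classical dilation argument. The essential point is that although $\pi$ is only assumed positive and unital (not completely positive) on all of $\scL(\cH)$, we only ever need to evaluate it on the commutative $C^*$-algebra generated by $x$.

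First I would restrict attention to $\scA := C^*(1,x)\subset\scL(\cH)$. Since $\Sp{x}\subset I\subset\R$ the element $x$ is selfadjoint and $\scA$ is commutative; as operator convex functions are continuous, the continuous functional calculus gives $f(x)\in\scA$. A positive unital linear map from a commutative $C^*$-algebra into any $C^*$-algebra is automatically completely positive (Stinespring), so $\pi|_{\scA}$ is unital and completely positive. I would then invoke Stinespring's dilation theorem to produce a Hilbert space $\cK$, a unital $*$-representation $\rho:\scA\to\scL(\cK)$ and a map $V:\cH'\to\cK$ with $\pi(a)=V^*\rho(a)V$ for all $a\in\scA$; here $V$ is an isometry because $\pi(1)=V^*\rho(1)V=V^*V=1_{\cH'}$.

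Writing $y:=\rho(x)$, this is a bounded selfadjoint operator with $\Sp{y}\subset\Sp{x}\subset I$, and multiplicativity of $\rho$ yields $\rho(f(x))=f(y)$. Hence $f(\pi(x))=f(V^*yV)$ while $\pi(f(x))=V^*\rho(f(x))V=V^*f(y)V$, so the proposition reduces to the compression inequality $f(V^*yV)\leq V^*f(y)V$.

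Finally I would prove this compression inequality by the $2\times2$ averaging trick, which is where operator convexity enters. Decompose $\cK=\ran V\oplus(\ran V)^\perp$ and write $y$ as a block operator with selfadjoint corner $y_{11}=V^*yV$; each diagonal corner is selfadjoint with spectrum in $\op{conv}(\Sp{y})\subset I$, the inclusion holding because $\Sp{y}$ is compact. With the selfadjoint unitary $W=\bigl(\begin{smallmatrix}1&0\\0&-1\end{smallmatrix}\bigr)$ one has $\tfrac12(y+WyW^*)=\op{diag}(y_{11},y_{22})$. Applying operator convexity to this average and using $f(WyW^*)=Wf(y)W^*$ (conjugation by a unitary commutes with the functional calculus) gives $\op{diag}(f(y_{11}),f(y_{22}))\leq\op{diag}((f(y))_{11},(f(y))_{22})$; comparing $(1,1)$-corners yields $f(y_{11})\leq(f(y))_{11}$, that is $f(V^*yV)\leq V^*f(y)V$. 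Combined with the previous step this is exactly $f(\pi(x))\leq\pi(f(x))$, as desired. I expect the main obstacle to be the justification that reduces a merely positive unital map to a Stinespring dilation---the device of passing to the commutative algebra $C^*(1,x)$ is precisely what overcomes it, after which the block-averaging step is the engine that converts operator convexity into the inequality.
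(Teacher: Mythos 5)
The paper gives no proof of this proposition at all---it is quoted from the literature \cite{Ch:IJM74, Da:PAMS57} and \cite[Theorem~1.20]{PFMS:Zagreb05}---and your argument is precisely the classical Davis--Choi proof contained in those cited sources: restrict $\pi$ to the commutative algebra $C^*(1,x)$, where positivity upgrades to complete positivity, dilate via Stinespring to write $\pi(a)=V^*\rho(a)V$ with $V$ an isometry, and deduce the compression inequality $f(V^*yV)\leq V^*f(y)V$ by the $2\times 2$ averaging trick with $W=\bigl(\begin{smallmatrix}1&0\\0&-1\end{smallmatrix}\bigr)$; your treatment of the spectral containment $\Sp(y_{11})\subset\op{conv}(\Sp y)\subset I$ is also correct. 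The one inference to repair is cosmetic: a real spectrum does not by itself force $x$ to be selfadjoint (a nontrivial Jordan block has real spectrum), so selfadjointness of $x$ should be treated as an implicit hypothesis of the proposition---as it is in the paper's definition of operator convexity and in the cited Theorem~1.20, and as is in any case required for the continuous functional calculus defining $f(x)$---rather than derived from $\Sp x\subset I\subset\R$.
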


\begin{remark} \label{rem:preliminaries.concave}
The inequality~(\ref{eq:preliminaries.operator-Jensen}) is of course reversed if we employ an operator concave function instead of an operator convex function.
\end{remark}

\section{The Proof of Theorem~\ref{thm:intro.log-Sobolev-ineq}} \label{sec:Proof}

In this section, we provide the proof of Theorem~\ref{thm:intro.log-Sobolev-ineq}, the main result of this paper.

\begin{proof}[Proof of Theorem~\ref{thm:intro.log-Sobolev-ineq}]
Let $0<x\in C^\infty(\T_\theta^n)$. Then, for any $\varepsilon>0$, we have
\begin{equation} \label{eq:proof.introducing-epsilon}
\tau\left[ x^2 \log\left( \frac{x^2}{\|x\|_{L_2}^2} \right) \right] = \frac{1}{\varepsilon} \, \tau\left[ x^2 \log\left( \frac{x^2}{\|x\|_{L_2}^2} \right)^\varepsilon \right] = \frac{\|x\|_{L_2}^2}{\varepsilon} \, \tau\left[ \frac{x^2}{\|x\|_{L_2}^2} \log\left( \frac{x^{2\varepsilon}}{\|x\|_{L_2}^{2\varepsilon}} \right) \right] .
\end{equation}
Note that the map $\scL(L_2(\T^n))\ni u\mapsto\tau\left[ \frac{|x|^2}{\|x\|_{L^2}^2}u \right]\in\C$ is a positive normalized linear map. Moreover, we know by~\cite[Example~1.7]{PFMS:Zagreb05} that the logarithmic function $\log{t}$ is operator concave on $(0,\infty)$. Therefore, as we have $\Sp\big(x^{2\varepsilon}/\|x\|_{L_2}^{2\varepsilon}\big)\subset(0,\infty)$, it follows from Jensen's operator inequality (Proposition~\ref{prop:preliminaries.operator-Jensen} and Remark~\ref{rem:preliminaries.concave}) that we have
\begin{align}
\nonumber \tau\left[ \frac{x^2}{\|x\|_{L_2}^2} \log\left( \frac{x^{2\varepsilon}}{\|x\|_{L_2}^{2\varepsilon}} \right) \right] &\leq \log \tau\left( \frac{x^{2\varepsilon+2}}{\|x\|_{L_2}^{2\varepsilon+2}} \right) \\\nonumber
&= (\varepsilon+1)\log \left\| \frac{x}{\|x\|_{L_2}} \right\|_{L_{2\varepsilon+2}}^2 \\
&= (\varepsilon+1)\log\left( \frac{\|x\|_{L_{2\varepsilon+2}}^2}{\|x\|_{L_2}^2} \right) . \label{eq:proof.applying-Jensen}
\end{align}
For all $0<b,t\in\R$, we have
\begin{equation*}
\log{t} \leq bt - \log{b} - 1 .
\end{equation*}
Combining this with the estimates~(\ref{eq:proof.introducing-epsilon}) and~(\ref{eq:proof.applying-Jensen}) we obtain
\begin{align}
\nonumber \tau\left[ x^2 \log\left( \frac{x^2}{\|x\|_{L_2}^2} \right) \right] &\leq \frac{\|x\|_{L_2}^2(\varepsilon+1)}{\varepsilon} \, \log\left( \frac{\|x\|_{L_{2\varepsilon+2}}^2}{\|x\|_{L_2}^2} \right) \\\nonumber
&\leq \frac{\|x\|_{L_2}^2(\varepsilon+1)}{\varepsilon} \left( b\frac{\|x\|_{L_{2\varepsilon+2}}^2}{\|x\|_{L_2}^2} - \log{b} - 1 \right) \\
&= \frac{\varepsilon+1}{\varepsilon} \left( b \, \|x\|_{L_{2\varepsilon+2}}^2 - [\log{b}+1] \, \|x\|_{L_2}^2 \right) . \label{eq:proof.b-introduced-in-the-estimate}
\end{align}
Let $\varepsilon>0$ be such that $2\varepsilon+2 = \frac{2n}{n-2s}$ for $0<s<\frac{n}{2}$ and set $b = ea^2$ for $a>0$. We know by Proposition~\ref{prop:preliminaries.Sobolev-embedding} that there is $C(n,s)>0$ depending only on $n$ and $s$ such that
\begin{equation*}
\|x\|_{L_{2\varepsilon+2}}^2 \leq C(n,s) \, \|x\|_{W_2^s}^2 .
\end{equation*}
It then follows from this and~(\ref{eq:proof.b-introduced-in-the-estimate}) that
\begin{align*}
\tau\left[ x^2 \log\left( \frac{x^2}{\|x\|_{L_2}^2} \right) \right] &\leq \frac{\frac{n}{n-2s}}{\frac{2s}{n-2s}} \left( ea^2 \, \|x\|_{L_{2\varepsilon+2}}^2 - [\log(ea^2)+1] \, \|x\|_{L_2}^2 \right) \\
&\leq \frac{n}{2s} \left( ea^2 C(n,s) \, \|x\|_{W_2^s}^2 - 2(\log{a}+1) \, \|x\|_{L_2}^2 \right) \\
&= \frac{nea^2}{2s} C(n,s) \, \|x\|_{W_2^s}^2 - \frac{n}{s} (\log{a}+1) \, \|x\|_{L_2}^2 .
\end{align*}
This completes the proof.
\end{proof}

\section*{Acknowledgements}
The author wishes to thank Michael Ruzhansky for helpful discussions related to the topic of this paper. The research for this article is financially supported by the FWO Odysseus 1 grant G.0H94.18N: Analysis and Partial Differential Equations and by the Methusalem programme of the Ghent University Special Research Fund (BOF) (Grant number 01M01021).

\end{document}